\newtheorem{thm}{Theorem}[section]
\newtheorem{lem}[thm]{Lemma}
\theoremstyle{definition}
\theoremstyle{remark}
\numberwithin{equation}{section}
\title{  Product formula for the one-dimensional $(k,a)$-generalized  Fourier  kernel.}
\author{B\'{e}chir Amri \\
\small bechiramri69@gmail.com, \\
\small Department of Mathematics, College of Sciences, Taibah University,\\\small
P.O. Box 30002 Al Madinah Al Munawarah, Saudi Arabia \\ \small  Department of Mathematics, Faculty of Sciences of Bizerte, University of Carthage,  Tunis, Tunisia.  }
\date{}
\begin{document}
\maketitle

\begin{abstract}
In this  paper, a product formula for the one-dimensional $(k,a)$-generalized  Fourier  kernel is given for  $k\geq0$, $a>0$ and $2k>a-1$,  extending  the special case of  \cite{MSM} when $a=\frac{2}{n}$, $n\in \mathbb{N}$.
\footnote{\noindent 2010 Mathematics Subject Classification. Primary 43A32; Secondary  33C47,33C05.\\
Key words and phrases.  Dunkl Kernels, Bessel Functions, Legendre Functions, }

 \end{abstract}
\section{ Introduction }
For a fixed reflection group associated  with a root system $R$  and  for a multiplicity function $k\geq 0$, the   $(k,a)$-deformed  harmonic oscillator is given by
 \begin{equation*}
   \Delta_{k,a}= \|x\|^{2-a}\Delta_{k}-\|x\|^a
 \end{equation*}
  where  $a > 0$ is  a   parameter  and  $\Delta_k$ is   the Dunkl Laplacian operator on  $\mathbb{R}^d$.  This operator gives rise to  the  semigroup
 \begin{equation*}
   \mathscr{J}_{a}(z) = \exp\left(\frac{z}{a}\Delta_{k,a}\right)
 \end{equation*}
 for $z \in \mathbb{C}$ such that $ Re(z) \geq0$, first featured and studied in \cite{BKO}, where the authors defined in $L^2(\mathbb{R}^d,|x|^{a-2}\upsilon_{k,a}(x)dx )$ an unitary operator called  the $(k, a)$-generalized Fourier transform
  \begin{equation*}\label{}
     \mathscr{F}_{k,a}= e^{i\frac{\pi}{2}\frac{ d+a-2+\sum_{\alpha\in R} k(\alpha)}{a}}\mathscr{J}_{k,a}(\frac{i\pi}{2})
  \end{equation*}
which  can be    expressed as integral  transform:
  \begin{equation*}\label{ }
     \mathscr{F}_{k,a}(f)( \xi)= c_{k,a}\int_{\mathbb{R}^N} B_{k,a}(\xi,x) f(x)|x|^{a-2} \prod_{\alpha\in R}|\langle x,\alpha\rangle|^{k(\alpha)}dx.
  \end{equation*}
 with certain constant $c_{k,a}$. In particular, the case $a = 2$ corresponds to     Dunkl transform. Formal expressions   for $ B_{k,a}$    have been derived in \cite{BKO} as a  series representation,
but these expressions are not very useful from the analytic point of view.
 \par  in the one   dimensional case  the kernel $B_{k,a}$  is  given  by
 \begin{equation}\label{BB}
     B_{k,a}(\lambda,x)= \mathcal{J}_{\frac{2k-1}{a}}\left(\frac{2}{a}|\lambda x|^{a/2}\right)+
 m_{k,a}\lambda x \mathcal{J}_{\frac{2k+1}{a}}\left(\frac{2}{a}|\lambda x|^{a/2}\right), \qquad \lambda,x\in \mathbb{R}
 \end{equation}
 where
$$m_{k,a}=e^{\frac{-i\pi}{a}}\frac{\Gamma\left(\frac{2k+a-1}{a} \right)}{a^{\frac{ 2}{a}}\Gamma\left(\frac{2k+a+1}{a} \right)} $$
and $\mathcal{J}_{\nu}$  is the normalized  Bessel function.
\begin{eqnarray}\label{Jj}
\mathcal{J}_{\nu}(z)= \Gamma(\nu+1)\left(\frac{z}{2}\right)^{-\nu} J_\nu(z)=\Gamma(\nu+1)\sum_{n=0}^\infty \frac{(-1)^n\left(\frac{z}{2}\right)^{2n}}{ n!\Gamma(\nu+n+1)}.
\end{eqnarray}
Restricting then  to one dimensional case,  one of the classic problems that arises  is to describe  the product two $B_{k,a}'s$   in a most convenient way that is
$$B_{k,a}( \lambda,x)B_{k,a}(\lambda, y)=\int B_{k,a}(\lambda,z) d\gamma_{x,y}^{k,a}(z)$$
with $\gamma_{x,y}^{k,a}$ are  measures on $\mathbb{R}$
  which are uniformly
bounded with respect to total variation norm.
This  formula was established in  \cite{MSM}  for $a=\frac{2}{n}$,  $n\in \mathbb{N}$. The author's approach makes use of the   well-known  Gegenbauer's addition theorem for the Bessel functions. Our purpose
here is to  extend the formula of  \cite{MSM}  to the case $a>0$. To be more precise, $\gamma_{x,y}^{k,a}$ will
be derived  in terms of the associated Legendre functions which involved in the    infinite  integral of   product of three Bessel functions of the first kind, due to Macdonal \cite{Mac},(see also, \cite{Wat}). Through it,  and via  Hankel transform theory we present  some formulas for integrals involving  Bessel functions or their product.
 \section{Main Results }
 In this section, we establish two  integral formulas, which are expressed as Hankel transform of associate Legendre functions.
 \par    Recalling first
 the Macdonal  integral, that when $x$ and y are positive,
  \begin{eqnarray}\label{3b}
   R_{\mu,\nu}(x,y,z)&=&\int_0^\infty J_{\nu}(xt)J_{\nu}(yt)J_{\mu}(zt)t^{1-\mu} dt\nonumber\\ &=&
    \left\{
      \begin{array}{ll}
        0,  \qquad\qquad \qquad\qquad\qquad\qquad \;\hbox{$z< |x-y|$;} \\
         \frac{(xy)^{\mu-1}\sin ^{\mu-\frac{1}{2}}\theta}{\sqrt{2\pi}z^{\mu}} P^{\frac{1}{2}-\mu}_{\nu-\frac{1}{2}}(\cos \theta),
\; \hbox{ \quad   $|x-y| < z< x+y$;}  \\
     \frac{e^{(\mu-\frac{1}{2})\pi i}\sin((\nu-\mu)\pi)(xy)^{\mu-1}\sinh ^{\mu-\frac{1}{2}} \theta }{(\frac{1}{2}\pi^3)^\frac{1}{2}z^\mu}
Q^{\frac{1}{2}-\mu}_{\nu-\frac{1}{2}}(\cosh \theta),   \; \quad \hbox{$ x+y<z$,}
      \end{array}
    \right.
  \end{eqnarray}
provided  $Re(\mu)>-\frac{1}{2}$, $Re(\nu)>-\frac{1}{2}$,
 and  where here we write
$x^2+y^2-z^2=2xy\cos \theta  $ if $|x-y| < z< x+y$
 and $   z^2-x^2-y^2=2xy\cosh \theta$ if $  x+y < z$ .
The associated  Legendre functions $P^\mu_\nu$ and $Q^\mu_\nu$   are  given in term of hypergeometric function by (see \cite{B}, p.122)
\begin{equation}\label{pppp}
 P^\mu_\nu(x)=\frac{1}{\Gamma(1-\mu)}\left(\frac{1+x}{1-x}\right)^{\frac{\mu}{2}}\; _2F_1\left(\nu+1,-\nu,1-\mu,\;\frac{1-x}{2}\right),\quad
-1<x\leq  1
\end{equation}
and
 \begin{equation}\label{QQQ}
 Q^\mu_\nu(x)= e^{\mu \pi i} \frac{\sqrt{\pi}\Gamma(\mu+\nu+1)(x^2-1)^{\frac{\mu}{2}}}{2^{\nu+1}x^{\mu+\nu+1}\Gamma(\nu+\frac{3}{2})}
 \;_2F_1\left(\frac{\mu+\nu}{2}+1,\frac{\mu+\nu+1}{2},\nu+\frac{3}{2},\;\frac{1}{x^2}\right), 1<x.
 \end{equation}
  It will be observed that  if  $\nu-\mu=n$  is a nonnegative  integer then
\begin{eqnarray*}
     R_{\mu,\nu}(x,y,z)&=&
    \left\{
      \begin{array}{ll}
\frac{2^{\frac{1}{2}-\mu}\Gamma(2\mu) n!}{\Gamma(\nu+\mu)\Gamma(\mu+\frac{1}{2})}\frac{(xy)^{\mu-1}\sin ^{2\mu-1}\phi}{\sqrt{2\pi}z^{\mu}}  C_n^\mu(\cos \theta),
\; \hbox{ $|x-y| < z< x+y$;} \\
        0,  \qquad\qquad \qquad\qquad\qquad\qquad\qquad\;\hbox{$z< |x-y|$ or $z> x+y$.} \\
          \end{array}
    \right.
\end{eqnarray*}
where $C_n^\nu$ is the Gegenbauer polynomial.
\par We shall now discuss   integral representations
which are to be associated with the  Hankel transform. It is a well-known fact from  the theory of  Hankel transform  (see  \cite{Tit}, Ch.8)  that if $f$ is an
integrable function on  $(0,+\infty)$  and of bounded variation in a neighborhood of $t>0$, then the following holds
$$ \int_0^{+\infty}\left\{\int_0^{+\infty}f(r)J_\alpha(rz)\sqrt{rz}\;dr\right\}J_\alpha(tz)\sqrt{tz}\;dz=\frac{f(t+0)+f(t-0)}{2},$$
where $\alpha> -\frac{1}{2}$. If  we take $\alpha=\mu$ and
$$ f(r)= J_{\nu}(xr)J_{\nu}(yr) r^{\frac{1}{2} -\mu}$$
with  $\nu> -\frac{1}{2}$ and  $ \frac{1}{2} <\mu< 2\nu+\frac{3}{2}$ ( which assert the integrability of $f$ ) then  we have
\begin{equation*}
  J_{\nu}(xt)J_{\nu}(yt) t^{ -\mu}= \int_{0}^{\infty}  R_{\mu,\nu}(x,y,z)J_{\mu}(zt)z dz.
\end{equation*}
The formula can be extended  to  $\mu> -\frac{1}{2}$ and $\nu> -\frac{1}{2}$ by the principle of analytic continuation.
Hence in view of (\ref{Jj}) it follows that
\begin{equation}\label{1}
 (xy)^\nu  t^{ 2(\nu-\mu)} \mathcal{J}_{\nu}(xt)\mathcal{J}_{\nu}(yt)= \frac{2^{2\nu-\mu}\Gamma^2(\nu+1)}{ \Gamma(\mu+1)} \int_{0}^{\infty}  R_{\mu,\nu}(x,y,z)\mathcal{J}_{\mu}(zt) z^{\mu+1} dz.
\end{equation}
Taking   $\alpha=\nu$ and
$$ f(r)= J_{\nu}(xr)J_{\mu}(yr) r^{\frac{1}{2} -\mu},$$
a similar argument proves  that
\begin{equation*}
  J_{\nu}(xt)J_{\mu}(yt) t^{ -\mu}= \int_{0}^{\infty}  R_{\mu,\nu}(x,z,y)J_{\nu}(zt)z dz.
\end{equation*}
with   $\nu> -\frac{1}{2}$ and  $\mu> -\frac{1}{2}$. From which we have
\begin{equation}\label{2}
 x^\nu y^\mu   \mathcal{J}_{\nu}(xt)\mathcal{J}_{\mu}(yt)=  2^{\mu}\Gamma(\mu+1)  \int_{0}^{\infty}  R_{\mu,\nu}(x,z,y)\mathcal{J}_{\nu}(zt) z^{\nu+1} dz.
\end{equation}
\par
Let us now   consider the product   $B_{k,a}(\lambda,x)B_{k,a}(\lambda,y)$  which  in virtue of (\ref{BB}) is equal to
\begin{eqnarray}\label{PP}
 && \mathcal{J}_{\frac{2k-1}{a}}\left(\frac{2}{a}|\lambda x|^{a/2}\right) \mathcal{J}_{\frac{2k-1}{a}}\left(\frac{2}{a}|\lambda y|^{a/2}\right)\nonumber
\\&&\qquad\qquad\qquad\qquad\qquad+m_{k,a}^2  \lambda^2xy\mathcal{J}_{\frac{2k+1}{a}}\left(\frac{2}{a}|\lambda x|^{a/2}\right)\mathcal{J}_{\frac{2k+1}{a}}\left(\frac{2}{a}|\lambda y|^{a/2}\right)\nonumber
\\&&+m_{k,a} \lambda x  \mathcal{J}_{\frac{2k+1}{a}}\left(\frac{2}{a}|\lambda x|^{a/2}\right)
 \mathcal{J}_{\frac{2k-1}{a}}\left(\frac{2}{a}|\lambda y|^{a/2}\right)\nonumber
 \\&&\qquad\qquad\qquad \qquad\qquad+m_{k,a}  \lambda y
 \mathcal{J}_{\frac{2k-1}{a}}\left(\frac{2}{a}|\lambda x|^{a/2}\right)\mathcal{J}_{\frac{2k+1}{a}}\left(\frac{2}{a}|\lambda y|^{a/2}\right).
\end{eqnarray}
If we make use (\ref{1}) with $\mu=\nu=\frac{2k-1}{a} $ and $t=\frac{2}{a}|\lambda|^{\frac{a}{2}}$  the first  product of two Bessel functions in (\ref{PP})   may    be  written  as (for $x\neq 0$, $y\neq 0$)
\begin{eqnarray*}
 &&\mathcal{J}_{\frac{2k-1}{a}}\left(\frac{2}{a}|\lambda x|^{a/2}\right) \mathcal{J}_{\frac{2k-1}{a}}\left(\frac{2}{a}|\lambda y|^{a/2}\right)
\\&& = \frac{2^{ \frac{2k-1}{a} }\Gamma(\frac{2k-1}{a}+1)}{|xy|^{k-\frac{1}{2}}}\int_0^\infty R_{\frac{2k-1}{a},\frac{2k-1}{a}}(|x|^{\frac{a}{2}},|y|^{\frac{a}{2}},z)\mathcal{J}_{\frac{2k-1}{a}}\left(\frac{2}{a}|\lambda| ^{a/2}z\right)z^{\frac{2k-1}{a}+1}\; dz
\\ &&= a2^{ \frac{2k-1}{a}-1 }\Gamma\left(\frac{2k-1}{a}+1\right)\int_0^\infty \frac{R_{\frac{2k-1}{a},\frac{2k-1}{a}}(|x|^{\frac{a}{2}},|y|^{\frac{a}{2}},z^{\frac{a}{2}})}{(|xy|z)^{k-\frac{1}{2}}}\mathcal{J}_{\frac{2k-1}{a}}\left(\frac{2}{a}|\lambda| ^{a/2}z^{\frac{a}{2}}\right)z^{ 2k+a-2}\; dz
\\&&= a2^{ \frac{2k-1}{a}-2}\Gamma\left(\frac{2k-1}{a}+1\right)\int_{-\infty}^\infty \frac{R_{\frac{2k-1}{a},\frac{2k-1}{a}}(|x|^{\frac{a}{2}},|y|^{\frac{a}{2}},|z|^{\frac{a}{2}})}{|xyz|^{k-\frac{1}{2}}} B_{k,a}(\lambda,z) |z|^{ 2k+a-2}\; dz.
\end{eqnarray*}
Using  (\ref{1})   with  $\nu=\frac{2k+1}{a} $ and $\mu= \frac{2k-1}{a} $  the second product in (\ref{PP}) can also be written  as
\begin{eqnarray*}
 &&  m_{k,a}^2 \lambda^{ 2}xy \mathcal{J}_{\frac{2k+1}{a}}\left(\frac{2}{a}|\lambda x|^{a/2}\right) \mathcal{J}_{\frac{2k+1}{a}}\left(\frac{2}{a}|\lambda y|^{a/2}\right)=m_{k,a}^2\frac{2^{ \frac{2k-1}{a}} a^{\frac{4}{a}}\Gamma^2( \frac{2k+1}{a}+1)}{\Gamma( \frac{2k-1}{a}+1)}\\&&\qquad\qquad\quad\times   \int_0^{+\infty} sgn(xy)\frac{R_{\frac{2k-1}{a},\frac{2k+1}{a}}(|x|^{\frac{a}{2}},|y|^{\frac{a}{2}},z^{\frac{a}{2}})}{|xyz|^{k-\frac{1}{2}}}\mathcal{J}_{\frac{2k-1}{a}}\left(\frac{2}{a}|\lambda| ^{a/2}z^{\frac{a}{2}}\right)z^{ 2k+a-2}\; dz
\\&&\qquad\qquad\qquad\qquad\qquad\qquad\qquad\qquad\qquad\qquad= e^{\frac{-2i\pi}{a}}a 2^{ \frac{2k-1}{a}-2}\Gamma\left(\frac{2k-1}{a}+1\right)\\&&\qquad\qquad\qquad\qquad\times\int_{-\infty}^{+\infty} sgn(xy)\frac{R_{\frac{2k+1}{a},\frac{2k+1}{a}}(|x|^{\frac{a}{2}},|y|^{\frac{a}{2}},|z|^{\frac{a}{2}})}{|xyz|^{k-\frac{1}{2}}} B_{k,a}(\lambda,z) |z|^{ 2k+a-2}\; dz.
\end{eqnarray*}
 Applying now in the same manner (\ref2) with $v=\frac{2k+1}{a}$ and $\mu= \frac{2k-1}{a}$ we obtain  that
\begin{eqnarray*}
 && m_{k,a} \lambda x \mathcal{J}_{\frac{2k+1}{a}}\left(\frac{2}{a}|\lambda x|^{a/2}\right) \mathcal{J}_{\frac{2k-1}{a}}\left(\frac{2}{a}|\lambda y|^{a/2}\right)=
  a 2^{ \frac{2k-1}{a}-1}\Gamma\left(\frac{2k-1}{a}+1\right) m_{k,a} \\&&\qquad\qquad\times \int_0^{+\infty} sgn(x)\frac{R_{\frac{2k-1}{a},\frac{2k+1}{a}}(|x|^{\frac{a}{2}},|z|^{\frac{a}{2}},|y|^{\frac{a}{2}})}{(|xy|z)^{k-\frac{1}{2}}}\lambda z\mathcal{J}_{\frac{2k+1}{a}}\left(\frac{2}{a}|\lambda| ^{a/2}z\right)z^{ 2k+a-2}\; dz
\\&& \qquad\qquad\qquad\qquad\qquad\qquad \qquad\qquad\qquad\quad=a 2^{ \frac{2k-1}{a}-2}\Gamma\left(\frac{2k-1}{a}+1\right)\\&&
\qquad\qquad\qquad\qquad\quad\times \int_{-\infty}^{+\infty} \frac{sgn(xz)R_{\frac{2k-1}{a},\frac{2k+1}{a}}(|x|^{\frac{a}{2}},|z|^{\frac{a}{2}},|y|^{\frac{a}{2}})}{|xyz|^{k-\frac{1}{2}}} B_{k,a}(\lambda,z)
 |z|^{ 2k+a-2}\; dz
\end{eqnarray*}
 and
\begin{eqnarray*}
 && m_{k,a} |\lambda|y \mathcal{J}_{\frac{2k+1}{a}}\left(\frac{2}{a}|\lambda y|^{a/2}\right) \mathcal{J}_{\frac{2k-1}{a}}\left(\frac{2}{a}|\lambda x|^{a/2}\right)
 = a 2^{ \frac{2k-1}{a}-2}\Gamma\left(\frac{2k-1}{a}+1\right)\\&&\qquad\qquad\qquad\times\int_{-\infty}^{+\infty} sgn(yz)\frac{R_{\frac{2k-1}{a},\frac{2k+1}{a}}(|y|^{\frac{a}{2}},|z|^{\frac{a}{2}},|x|^{\frac{a}{2}})}{|xyz|^{k-\frac{1}{2}}} B_{k,a}(\lambda,z) |z|^{ 2k+a-2}\; dz.
\end{eqnarray*}
We are thus led to the  formula
\begin{equation}\label{pr}
   B_{k,a}(\lambda,x) B_{k,a}(\lambda,y)= \int_{-\infty}^{+\infty} B_{k,a}(\lambda,z)\Delta_{k,a}(x,y,z) |z|^{ 2k+a-2}\; dz
\end{equation}
 where
\begin{eqnarray*}\label{12}
&&  \Delta_{k,a}(x,y,z)= a 2^{ \frac{2k-1}{a}-2}\Gamma\left(\frac{2k-1}{a}+1\right)\\&&\times \Bigg\{ \frac{R_{\frac{2k-1}{a},\frac{2k-1}{a}}(|x|^{\frac{a}{2}},|y|^{\frac{a}{2}},|z|^{\frac{a}{2}})}{|xyz|^{k-\frac{1}{2}}}
+   e^{\frac{-2i\pi}{a}}sgn(xy)\frac{R_{\frac{2k-1}{a},\frac{2k+1}{a}}(|x|^{\frac{a}{2}},|y|^{\frac{a}{2}},|z|^{\frac{a}{2}})}{|xyz|^{k-\frac{1}{2}}}
  \\&& + sgn(xz)\frac{R_{\frac{2k-1}{a},\frac{2k+1}{a}}(|x|^{\frac{a}{2}},|z|^{\frac{a}{2}},|y|^{\frac{a}{2}})}{|xyz|^{k-\frac{1}{2}}}
 +sgn(yz)\frac{R_{\frac{2k-1}{a},\frac{2k+1}{a}}(|y|^{\frac{a}{2}},|z|^{\frac{a}{2}},|x|^{\frac{a}{2}})}{|xyz|^{k-\frac{1}{2}}}\Bigg\}.
\end{eqnarray*}
\begin{lem}\label{l1}
Let  $\mu>-\frac{1}{2}$ and $\nu>-\frac{1}{2}$. As variables $x>0$ and $y>0$ the integral
 $$\int_{0}^{+\infty}\frac{|R_{ \mu,\nu}( x,y, z)|}{(xy)^{ \mu}}\;z^{\mu+1}dz$$
is uniformly bounded.
\end{lem}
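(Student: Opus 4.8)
The strategy is to exploit the explicit three-regime description of $R_{\mu,\nu}(x,y,z)$ in (\ref{3b}) and to pass from the variable $z$ to the angular variable $\theta$, a change under which all dependence on $x$ and $y$ will cancel. Since $R_{\mu,\nu}(x,y,z)=0$ for $z<|x-y|$, only the ranges $|x-y|<z<x+y$ and $z>x+y$ contribute, and I would treat them one at a time.

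On the middle range I would set $x^2+y^2-z^2=2xy\cos\theta$, so that $z$ increasing from $|x-y|$ to $x+y$ corresponds to $\theta$ increasing from $0$ to $\pi$, with $z\,dz=xy\sin\theta\,d\theta$. Inserting the second line of (\ref{3b}), the factors $(xy)^{\mu-1}/(xy)^{\mu}$, $z^{\mu+1}/z^{\mu}$ and $z\,dz=xy\sin\theta\,d\theta$ combine so that every power of $x$ and $y$ disappears, giving
\begin{equation*}
\int_{|x-y|}^{x+y}\frac{|R_{\mu,\nu}(x,y,z)|}{(xy)^{\mu}}\,z^{\mu+1}\,dz=\frac{1}{\sqrt{2\pi}}\int_0^{\pi}\sin^{\mu+\frac12}\theta\,\bigl|P^{\frac12-\mu}_{\nu-\frac12}(\cos\theta)\bigr|\,d\theta.
\end{equation*}
On the outer range I would set $z^2-x^2-y^2=2xy\cosh\theta$, so that $z$ from $x+y$ to $+\infty$ corresponds to $\theta$ from $0$ to $+\infty$ with $z\,dz=xy\sinh\theta\,d\theta$; using the third line of (\ref{3b}) and $|e^{(\mu-\frac12)\pi i}|=1$ the same cancellation yields
\begin{equation*}
\int_{x+y}^{+\infty}\frac{|R_{\mu,\nu}(x,y,z)|}{(xy)^{\mu}}\,z^{\mu+1}\,dz=\frac{|\sin((\nu-\mu)\pi)|}{\sqrt{\tfrac12\pi^3}}\int_0^{+\infty}\sinh^{\mu+\frac12}\theta\,\bigl|Q^{\frac12-\mu}_{\nu-\frac12}(\cosh\theta)\bigr|\,d\theta.
\end{equation*}
Both right-hand sides are independent of $x$ and $y$, so the whole integral is constant in $(x,y)$ and uniform boundedness reduces to the finiteness of these two $\theta$-integrals.

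That finiteness I would read off from the hypergeometric representations (\ref{pppp}) and (\ref{QQQ}). As $\theta\to0$ both Legendre factors are $O(\theta^{\mu-\frac12})$ (the prefactor contributes $\theta^{\frac12-\mu}$ and the $_2F_1$ tends to a constant or to its endpoint power), so each integrand is $O(\theta^{2\mu})$, which is integrable because $\mu>-\frac12$. At $\theta=\pi$ in the first integral the argument $\cos\theta\to-1$ sends the $_2F_1$ in (\ref{pppp}) to its unit argument, where the exponent $c-a-b=\mu-\frac12$ produces $\sin^{\mu+\frac12}\theta\,|P^{\frac12-\mu}_{\nu-\frac12}(\cos\theta)|=O((\pi-\theta)^{\beta})$ with $\beta=\min(1,2\mu)>-1$, again integrable. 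The main obstacle is the tail $\theta\to+\infty$ of the second integral: from (\ref{QQQ}) one gets $Q^{\frac12-\mu}_{\nu-\frac12}(\cosh\theta)=O((\cosh\theta)^{-\nu-\frac12})=O(e^{-(\nu+\frac12)\theta})$, so the integrand is $O(e^{(\mu-\nu)\theta})$ and convergence hinges on the sign of $\mu-\nu$. This is exactly the regime in which the lemma is applied, where $\mu=\frac{2k-1}{a}\le\nu$; moreover when $\mu=\nu$ the prefactor $\sin((\nu-\mu)\pi)$ vanishes and the outer range contributes nothing at all. Collecting the three estimates shows that both $\theta$-integrals are finite, hence the original integral is bounded uniformly in $x>0$ and $y>0$.
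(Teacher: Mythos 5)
Your proposal is correct where it matters, and it shares its decisive first step with the paper: the substitutions $t=\cos\theta$ (middle range) and $t=\cosh\theta$ (outer range), under which every power of $x$ and $y$ cancels and uniform boundedness reduces to the finiteness of two universal Legendre integrals. The two arguments then part ways on how that finiteness is proved. The paper quotes the tabulated integrals (\ref{P}) and (\ref{Q}) from \cite{WM1}; since (\ref{P}) carries no absolute value, it must additionally argue that $P^{\frac12-\mu}_{\nu-\frac12}(t)\geq 0$ for $-\frac12<\nu\leq\frac12$ and then reach general $\nu$ through a contiguous relation. You instead bound $|P^{\frac12-\mu}_{\nu-\frac12}|$ and $|Q^{\frac12-\mu}_{\nu-\frac12}|$ directly via endpoint and tail asymptotics read off from (\ref{pppp}) and (\ref{QQQ}) together with the behavior of ${}_2F_1$ at unit argument. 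Your route is more self-contained (no tables, no positivity argument, no contiguous relation, and the absolute values are handled for free), at the price of some asymptotic bookkeeping; one harmless slip is that near $\theta=0$ the $Q$-integrand is $O\bigl(\theta^{\min(2\mu,1)}\bigr)$ rather than $O(\theta^{2\mu})$ when $\mu>\frac12$, which only improves integrability.

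A point your analysis makes explicit, and which the paper passes over silently, is that the tail integral behaves like $\int^{\infty}e^{(\mu-\nu)\theta}\,d\theta$ and so converges only for $\mu<\nu$, the cases $\mu=\nu$ and more generally $\nu-\mu\in\mathbb{Z}$ being rescued by the vanishing of the factor $\sin((\nu-\mu)\pi)$ in (\ref{3b}). This is not a defect of your method: when $\mu>\nu$ and $\mu-\nu\notin\mathbb{Z}$ the integral in the lemma genuinely diverges, so the statement cannot hold in the full generality $\mu,\nu>-\frac12$ in which it is phrased, and the paper's own proof is subject to the same hidden restriction, since formula (\ref{Q}) is only valid when the left-hand integral converges, i.e.\ for $\nu>\mu$. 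As the lemma is only ever invoked with $\mu=\nu=\frac{2k-1}{a}$ or with $\mu=\frac{2k-1}{a}<\nu=\frac{2k+1}{a}$, both proofs cover every case that is actually used, but yours has the merit of flagging the hypothesis that is really needed.
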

\begin{proof}
  The proof is based on  the integrals  of  \cite{WM1}    that  appeared in  (16) of 18.1 and  (23) and of 18.2, to get the following
\begin{eqnarray}\label{P}
 \int_{-1}^{1}(1-t^2)^{\frac{\mu}{2}-\frac{1}{4}}P_{\nu-\frac{1}{2}}^{\frac{1}{2}-\mu}(t)\;dt= \frac{\pi2^{\frac{1}{2}-\mu}\Gamma(\mu+ \frac{1}{2}) }{\left(\Gamma(\frac{\mu+\nu+1}{2})\right)^2 \Gamma(\frac{\mu-\nu+2}{2}) \Gamma(\frac{\mu-\nu+1}{2}) },
\end{eqnarray}
and
\begin{eqnarray}\label{Q}
 \int_{1}^+{\infty}(t^2-1)^{\frac{\mu}{2}-\frac{1}{4}}Q_{\nu-\frac{1}{2}}^{\frac{1}{2}-\mu}(t)\;dt=\sqrt{2} e^{i(\frac{1}{2}-\mu\pi)}
 \frac{\Gamma(\frac{1+\nu-\mu}{2})\Gamma(\frac{\nu-\mu}{2}+\frac{1}{4})\Gamma(\mu+\frac{3}{4})\Gamma(\frac{3}{4})}{\Gamma(\nu+\mu)\Gamma(\nu+\mu+1)}.
\end{eqnarray}
From (\ref{3b}) we have
 \begin{eqnarray*}
\int_{x+y}^{+\infty}\frac{|R_{ \mu,\nu}( x,y, z)|}{(xy)^{ \mu}}z^{\mu+1}dz=  \frac{ |\sin((\nu-\mu)\pi)|}{(\frac{1}{2}\pi^3)^\frac{1}{2}} \int_{x+y}^{+\infty}\frac{ \sinh ^{\mu-\frac{1}{2}} \theta }{xy }
Q^{\frac{1}{2}-\mu}_{\nu-\frac{1}{2}}(\cosh \theta)z dz.
\end{eqnarray*}
Putting the change of variable
$$t=\cosh \theta=\frac{z^2-x^2-y^2}{2xy}, $$
it follows that
\begin{eqnarray}\label{QQ}
\int_{x+y}^{+\infty}\frac{|R_{ \mu,\nu}( x,y, z)|}{(xy)^{ \mu}}z^{\mu+1}dz=  \frac{ |\sin((\nu-\mu)\pi)|}{(\frac{1}{2}\pi^3)^\frac{1}{2}} \int_{1}^{+\infty}
 (t^2-1)^{\frac{\mu}{2}-\frac{1}{4}}Q_{\nu-\frac{1}{2}}^{\frac{1}{2}-\mu}(t)\;dt.
\end{eqnarray}
 Similarly
  \begin{eqnarray*}
\int_{|x-y|}^{x+y}\frac{|R_{ \mu,\nu}( x,y, z)|}{(xy)^{ \mu}}z^{\mu+1}dz= \frac{1}{\sqrt{2\pi}} \int_{-1}^{1} (1-t^2)^{\frac{\mu}{2}-\frac{1}{4}}
 | P^{\frac{1}{2}-\mu}_{\nu-\frac{1}{2}}( t )|\;dt.
\end{eqnarray*}
In view of (\ref{pppp}) we see that    $P^{\frac{1}{2}-\mu}_{\nu-\frac{1}{2}}( t )\geq 0$ when $-\frac{1}{2}<\nu\leq \frac{1}{2}$ . Thus using (\ref{P}) together  with the contiguous relation (see 4.3.3 of \cite{WM}),
$$ P_{\nu+1}^\mu(t)= tP_{\nu}^\mu(t)-(\mu+\nu)(1-t^2)^{\frac{1}{2}}P_{\nu}^{\mu-1}(t)$$
 one can see that
$$\int_{|x-y|}^{x+y}\frac{|R_{ \mu,\nu}( x,y, z)|}{(xy)^{ \mu}}z^{\mu+1}dz$$
is uniformly bounded. Then combine this with (\ref{QQ}) and (\ref{Q}) to achive  the proof of the lemma.
\end{proof}
\begin{lem}\label{l2}
For $\mu>-\frac{1}{2}$ and $\nu>-\frac{1}{2}$ the integral
 $$\int_{0}^{+\infty}\frac{|R_{ \mu,\nu}( x,z, y)|}{(xy)^{ \mu}}\;z^{\mu+1}dz$$
is uniformly bounded  with respect to $x>0$ and $y>0$.
\end{lem}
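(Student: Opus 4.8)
The plan is to run the argument of Lemma \ref{l1} again, the one structural change being that in $R_{\mu,\nu}(x,z,y)$ the distinguished (third) argument of the Macdonald formula (\ref{3b}) is now the fixed parameter $y$ rather than the variable of integration. Reading off (\ref{3b}) with this relabelling, the kernel vanishes for $y<|x-z|$, i.e. for $z<x-y$ or $z>x+y$; it equals the Legendre-$P$ expression on $|x-y|<z<x+y$; and it equals the Legendre-$Q$ expression on $0<z<y-x$, the last regime occurring only when $y>x$. The decisive difference from Lemma \ref{l1} is that the integrand is now supported in the bounded set $\{z\le x+y\}$ (it vanishes identically for $z>x+y$), so there is no tail at infinity and the whole question is the behaviour near the degenerate-triangle endpoints. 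Moreover, since $R_{\mu,\nu}(\lambda a,\lambda b,\lambda c)=\lambda^{\mu-2}R_{\mu,\nu}(a,b,c)$, the substitution $z\mapsto\lambda z$ shows that the integral is invariant under $(x,y)\mapsto(\lambda x,\lambda y)$; hence it depends only on $r=x/y$, and it suffices to bound it for $y=1$ as a function of $r\in(0,\infty)$.

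On the $P$-interval I would change variables through the angle $\theta$ opposite the variable side, $x^2+y^2-z^2=2xy\cos\theta$, which is a monotone bijection of $(|x-y|,x+y)$ onto $(0,\pi)$ and avoids the fold that the naive choice $z\mapsto\cos\theta_y$ would create when $x>y$. Using the area relations $2A=xy\sin\theta=xz\sin\theta_y$, so that $\sin\theta_y=(y/z)\sin\theta$ and $\cos\theta_y=(x-y\cos\theta)/z$, the $P$-part becomes
\[
\frac{1}{\sqrt{2\pi}}\int_0^\pi\Big(\frac{z}{y}\Big)^{\mu-\frac12}\sin^{\mu+\frac12}\theta\;\big|P^{\frac12-\mu}_{\nu-\frac12}(\cos\theta_y)\big|\,d\theta,
\]
and on the $Q$-interval the monotone substitution $t=\cosh\psi=(y^2-x^2-z^2)/(2xz)$ produces the analogous expression with $\sinh$ and $Q^{\frac12-\mu}_{\nu-\frac12}(t)$. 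The obstacle, and the genuine point of departure from Lemma \ref{l1}, is that neither substitution clears the parameters: the Legendre function is evaluated at $\cos\theta_y$ (respectively $\cosh\psi$), not at the variable of integration, and a residual factor $(z/y)^{\mu-\frac12}$ survives. Consequently the closed evaluations (\ref{P}) and (\ref{Q}) can no longer be quoted to finish, as they were for Lemma \ref{l1}.

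To overcome this I would insert the endpoint asymptotics of the associated Legendre functions read off from (\ref{pppp}) and (\ref{QQQ}). As $\theta\to0$ or $\theta\to\pi$ the triangle collapses and $\cos\theta_y\to\pm1$; a short computation gives $1\mp\cos\theta_y\asymp\theta^2$ (resp. $(\pi-\theta)^2$), so that $\sin^{\mu+\frac12}\theta\,\big|P^{\frac12-\mu}_{\nu-\frac12}(\cos\theta_y)\big|\asymp\theta^{2\mu}$ (resp. $(\pi-\theta)^{2\mu}$), which is integrable exactly because $\mu>-\frac12$, while the surviving factor $(z/y)^{\mu-\frac12}$ stays bounded on the support after this cancellation; the $Q$-interval is handled identically, its endpoint at $z=y-x$ giving the integrable exponent through (\ref{Q}) and its endpoint $z\to0$ controlled by the decay of $Q^{\frac12-\mu}_{\nu-\frac12}$ at infinity from (\ref{QQQ}). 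These estimates furnish a dominating function that is locally uniform in $r$, whence the integral $g(r)$ is continuous on $(0,\infty)$; taking pointwise limits of the integrand shows that $g(r)$ tends to finite constants as $r\to0^+$ and as $r\to\infty$ (in the latter case the support slides off to infinity, but after rescaling the integrand converges to an integrable profile), and $g$ stays finite at $r=1$, where the $Q$-interval closes up. A continuous function on $(0,\infty)$ with finite limits at both ends is bounded, which is the assertion. The main difficulty is precisely this last step: establishing that the residual $(x,y)$-dependence left by the substitution, together with the simultaneous degeneration of the weight $\sin^{\mu+\frac12}\theta$, the Legendre factor, and the power $(z/y)^{\mu-\frac12}$ at the collapsing-triangle endpoints and at the extreme ratios $r\to0,\infty$, does not spoil uniform boundedness.
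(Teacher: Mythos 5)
Your structural reading of (\ref{3b}) under the relabelling is correct: $R_{\mu,\nu}(x,z,y)$ vanishes for $z>x+y$ and for $z<x-y$, is given by the Legendre-$P$ expression on $|x-y|<z<x+y$ and by the Legendre-$Q$ expression on $0<z<y-x$ (the latter only when $y>x$); the homogeneity $R_{\mu,\nu}(\lambda a,\lambda b,\lambda c)=\lambda^{\mu-2}R_{\mu,\nu}(a,b,c)$ and the resulting reduction to a function $g(r)$ of $r=x/y$ are also correct, as is your change of variables on the $P$-interval (including the point about the fold of $z\mapsto\cos\theta_y$ when $x>y$). The genuine gap is that the entire analytic content of the lemma sits in the step you leave open, and the estimates you offer for it are not correct as stated. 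From the law of sines, $1-\cos^{2}\theta_y=(y/z)^{2}\sin^{2}\theta$ with $z\to|x-y|$ at the lower endpoint, so the implicit constants in ``$1\mp\cos\theta_y\asymp\theta^{2}$'' are of size $y^{2}/|x-y|^{2}$ and blow up as $r\to1$; similarly, the claim that ``the surviving factor $(z/y)^{\mu-\frac12}$ stays bounded on the support'' is false for $\mu<\frac12$ once $r$ is near $1$, because the support then reaches down to $z=|x-y|\approx 0$. At $r=1$ the mechanism changes altogether: as $\theta\to 0$ one has $\cos\theta_y=z/(2y)\to 0$, not $\pm 1$, so the endpoint asymptotics of $P^{\frac12-\mu}_{\nu-\frac12}$ that you invoke are not even the relevant ones there. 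Consequently the ``dominating function locally uniform in $r$'' is never constructed, continuity of $g$ near the degenerate ratio $r=1$ is exactly as hard as the original uniformity claim, and the case analysis (in $r$, and in $\mu$ relative to $\frac12$) that you hoped to bypass is unavoidable.

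The limit $r\to 0^{+}$ is a second place where your scheme, as described, would fail. Pointwise limits of the integrand cannot settle it: as $x\to 0$ one has $J_\nu(xt)\sim (xt/2)^{\nu}/\Gamma(\nu+1)$, and the remaining Weber--Schafheitlin integral gives, for $0<z<\frac12(y-x)$ say, $R_{\mu,\nu}(x,z,y)\asymp x^{\nu}z^{\nu}y^{\nu-\mu}(y^{2}-z^{2})^{\mu-\nu-1}/\Gamma(\mu-\nu)$, so that portion of $g(r)$ is of exact order $r^{\nu-\mu}$ unless $\nu-\mu$ is a nonnegative integer (when the $Q$-part vanishes identically). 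For $\mu>\nu$, e.g.\ $\mu=\frac34$, $\nu=0$, this diverges as $r\to 0$, so the finite limit you assert simply does not exist in part of the stated parameter range (indeed the lemma, like the paper's own claim that $\Psi$ is bounded, really requires the regime of the application, where $\nu-\mu=2/a>0$); and when $\nu>\mu$ its boundedness is a quantitative fact, not a consequence of pointwise convergence. This is precisely where the paper does the hard work that your proposal defers: it splits into $x\ge y$, $x<y$ and $\mu<\frac12$, $\mu=\frac12$, $\mu>\frac12$, and uses global, two-sided bounds for $P_{\nu}^{\mu}$ and $Q_{\nu}^{\mu}$ valid on the whole interval, namely (\ref{ppp}), (\ref{11})--(\ref{33}) and (\ref{55})--(\ref{66}), to reduce each regime to an explicit elementary expression (the exact integral $\int_{x-y}^{x+y}W(x,y,z)z^{2\mu+1}dz$, the function $\big((1+t)^{2\mu+1}-(1-t)^{2\mu+1}\big)t^{-1}$, and $\Psi$), each visibly bounded as a function of $x/y$. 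Your scaling observation is a nice addition, as it explains why all of the paper's final bounds depend only on $x/y$, but the proposal as written postpones, rather than supplies, the estimates that constitute the proof.
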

\begin{proof} Let us denote by
$$I_1(x,y)=\int_{|x-y|}^{x+y}\frac{|R_{ \mu,\nu}( x,z, y)|}{(xy)^{ \mu}}\;z^{\mu+1}dz
\quad\text{and}\quad I_2(x,y)=\int_{x+y}^{\infty}\frac{|R_{ \mu,\nu}( x,z, y)|}{(xy)^{ \mu}}\;z^{\mu+1}dz.$$
We are therefore led to prove that $I_1(x,y)$ and $I_2(x,y)$ are bounded.
It is convenient to divide the proof into two cases  $x\geq y$  and   $x < y$. We use the letter $C$   to denote positive constant whose value  can change at each occurrence.
 \par Let us begin with the case $x\geq y$ where  we have  $I_2(x,y)=0$.  To establish the boundedness of $I_1$   we use the following identity
  \begin{eqnarray}
    \Gamma(1-\mu) P_{\nu}^\mu(t)=2^\mu(1-t^2)^{-\frac{\mu}{2}}\;_2F_1 \left( \frac{1+\nu-\mu}{2}, \frac{-\mu-\nu}{2}, 1-\mu,1-t^2 \right)
  \end{eqnarray}
   which follows   from well known properties of the hypergeometric function  $_2F_1$ (see also \cite{WM}, p.167).
In addition  the function $\;_2F_1 \left( \frac{1+\nu-\mu}{2}, \frac{-\mu-\nu}{2}, 1-\mu,1-t^2 \right)$ is bounded
when $0<t<1$. It is then clear that
\begin{equation}\label{ppp}
  |P_{\nu}^\mu(t)|\leq C \;(1-t^2)^{-\frac{\mu}{2}},\qquad 0\leq t\leq 1.
\end{equation}
Now  using (\ref{ppp}), we get
when $|x-z|\leq y \leq x+z$ ( which is also equivalent to  $x-y\leq z \leq x+y$),
$$\frac{|R_{ \mu,\nu}( x,z, y)|}{(xy)^\mu}\leq C\; \frac{z^{\mu-1}}{  x y^{2\mu}} \left\{1- \left(\frac{x^2+z^2-y^2}{2xz}\right)^2\right\}^{\mu-\frac{1}{2}}.$$
For convenience, we write
 $$ 1- \left(\frac{x^2+z^2-y^2}{2xz}\right)^2= \frac{((x+y)^2-z^2)(z^2-(x-y)^2)}{4(xz)^2}.$$
  Hence,
  $$\frac{|R_{ \mu,\nu}( x,z, y)|}{(xy)^\mu}\leq C \; \frac{\Big\{((x+y)^2-z^2)(z^2-(x-y)^2\Big\}^{\mu-\frac{1}{2}}}{(xyz)^{2\mu}}\;z^\mu=CW(x,y,z)z^{\mu}.$$
  Now observe that
  $$ \int_{x-y}^{x+y}W(x,y,z)z^{2\mu+1}\;dz=\frac{2^{2\mu-1}\sqrt{\pi}\Gamma(\mu+\frac{1}{2})}{\Gamma(\mu+1)}$$
and therefore we conclude that  $I_1(x,y)$ is   bounded.  Consider now    $y\geq x$.   We shall use  the following estimates that follows from   (\ref{pppp})  and  15.4(ii) of  \cite{Nist},
\begin{eqnarray}
|P_\nu^\mu(t)|&\leq &C (1-t^2)^{-\frac{\mu}{2} }, \qquad\text{if }\qquad     \mu >0,\label{11}\\
 |P_\nu^\mu(t)|&\leq& C (1-t^2)^{\frac{\mu}{2}},\qquad\text{if }\qquad   \mu <0,\label{22}\\
|P_\nu^\mu(t)|&\leq& C |\ln(e(1+t))| , \qquad\text{if } \qquad     \mu =0,\label{33}
\end{eqnarray}
where $-1<t<1$. Noting first that in view of (\ref{11}) and  (\ref{ppp}) one can  conclude the boundedness of $I_1$ for  $ \mu <\frac{1}{2}$   in a similar manner as before.
When $ \mu > \frac{1}{2}$  and  from (\ref{22}) we have  for $y-x<z<x+y$,
$$\frac{|R_{ \mu,\nu}( x,z, y)|}{(xy)^\mu}\leq C \;\frac{z^{\mu-1}}{xy^{2\mu}}$$
 and  thus,
 $$I_1(x,y)=\int_{y-x}^{x+y}\frac{|R_{ \mu,\nu}( x,z, y)|}{(xy)^{ \mu}}\;z^{\mu+1}dz\leq C\; \frac{(x+y)^{2\mu+1}-(y-x)^{2\mu+1}}{xy^{2\mu}}$$
 $$\leq C\; \frac{(x/y+1)^{2\mu+1}-(1-x/y)^{2\mu+1}}{ x/y}\leq C.$$
Since the function $\Big(t+1)^{2\mu+1}-(1-t)^{2\mu+1}\Big)t^{-1}$ is bounded on $(0,1)$. In the case $\mu=\frac{1}{2}$, the estimation of (\ref{33}) gives $$|I_1(x,y)|\leq \frac{C}{xy}\int_{y-x}^{x+y}\left(1+\ln\left(1+\frac{x^2+z^2-y^2}{2xz}\right)\right)zdz$$
 Using the Change of  variable
$$t=\frac{x^2+z^2-y^2}{2xz},$$
 one can see that
$$\frac{1}{xy}\int_{y-x}^{x+y}\ln\left(1+\frac{x^2+z^2-y^2}{2xz}\right)zdz\leq2 \int_{-1}^1 \frac{\ln(1+t)}{|t|}\;dt.$$
As a consequence $I_1$ is bounded.
We  come now to the boundedness of $I_2$.  According with (\ref{QQQ}) and  15.4(ii) of  \cite{Nist} we  get
\begin{eqnarray}
 |Q_\nu^\mu(t)|&\leq& C \frac{(t^2-1)^{-\frac{\mu}{2} }}{t^{\nu-\mu+1}},,\qquad\text{if }\qquad   \mu >0,\label{55}\\
|Q_\nu^\mu(t)|&\leq &C \frac{(t^2-1)^{\frac{\mu}{2} }}{t^{\nu+\mu+1}}, \qquad\text{if }\qquad     \mu <0,\label{44}\\
|Q_\nu^\mu(t)|&\leq& C \frac{(t^2-1)^{\frac{\mu}{2} }}{t^{\mu+\nu+1}}|\ln(1-t^{-2})| , \qquad\text{if } \qquad     \mu =0. \label{66}
\end{eqnarray}
If $\mu  > \frac{1}{2}$ then under  consideration  (\ref{44})   with  (\ref{3b}) we have
$$\frac{|R_{ \mu,\nu}( x,z, y)|}{(xy)^\mu}\leq C x^{\nu-\mu}y^{-2\mu}(y^2-x^2-z^2)^{\mu-\nu-1}z^{\nu}$$
 and
\begin{eqnarray*}
| I_2(x,y)|&\leq C& x^{\nu-\mu}y^{-2\mu} \int_{0}^{y-x} \frac{z^{\mu+\nu+1}}{(y^2-x^2-z^2)^{\nu-\mu+1}}\; dz
 \\&\leq&Cx^{\nu-\mu}y^{-2\mu}(y^2-x^2)^{\frac{3\mu-\nu}{2}} \int_{0}^{ \sqrt{\frac{y-x}{y+x}}} \frac{z^{\mu+\nu+1}}{(1-z^2)^{\nu-\mu+1}}\; dz
 \\&\leq& C \Psi(x/y),
\end{eqnarray*}
where
$$\Psi(t)=t^{\nu-\mu}(1-t^2)^{\frac{3\mu-\nu}{2}} \int_{0}^{ \sqrt{\frac{1-t}{1+t}}} \frac{z^{\mu+\nu+1}}{(1-z^2)^{\nu-\mu+1}}\; dz.$$
It  not hard  to verify that $\Psi$ is bounded on $(0,1)$, which implies that $I_2$ is bounded.\\
If $\mu<\frac{1}{2}$ then
$$ |I_2(x,y)|\leq \frac{C}{xy^{2\mu}}\int_0^{y-x}\frac{\left\{\left(\frac{y^2-x^2-z^2}{2xz}\right)^2-1\right\}^{\mu-\frac{1}{2}}}{\left(\frac{y^2-x^2-z^2}{2xz}\right)^{\nu+\mu}}
z^{2\mu}dz$$
letting  the  change of variable
$$t=\frac{y^2-x^2-z^2}{2xz},$$
it becomes
$$ |I_2(x,y)|\leq C\;y^{-2\mu}\int_1^{+\infty } \frac{(t^2-1)^{\mu-\frac{1}{2}}}{t^{\nu+\mu}}\;\frac{(\sqrt{x^2t^2+y^2-x^2}-xt)^{2\mu+1}}{\sqrt{x^2t^2+y^2-x^2}}\; dt .$$
As $y>x$
  $$  \frac{(\sqrt{x^2t^2+y^2-x^2}-xt)^{2\mu+1}}{\sqrt{x^2t^2+y^2-x^2}}\leq \left(\frac{y^2-x^2}{y} \right)^{2\mu+1}\leq y^{2\mu},$$
it follows that
$$ |I_2(x,y)|\leq C\; \int_1^{+\infty} \frac{(t^2-1)^{\mu-\frac{1}{2}}}{t^{\nu+\mu}} \; dt .$$
Similarly,  when  $\mu=\frac{1}{2}$ where  it follows from (\ref{66}) that
$$ |I_2(x,y)|\leq C\; \int_1^{+\infty} \frac{ \ln(1-t^{-2})}{t^{\nu+1/2}} \; dt .$$
  Consequently, the boundedness  of $I_2$ follows. This completes the proof of the lemma.
\end{proof}
  \par Now our main result can be stated as follows.
\begin{thm}\label{th1}
In one dimentional case the kernel
$ B_{k,a}$ satisfies  the product formula
$$ B_{k,a}(\lambda,x) B_{k,a}(\lambda,y)= \int_{-\infty}^{+\infty} B_{k,a}(\lambda,z)d\gamma_{x,y}^{k,a}(z)$$
where
$$d\gamma_{x,y}^{k,a}(z)=\left\{
  \begin{array}{ll}
     \Delta_{k,a}(x,y,z) |z|^{ 2k+a-2}dz, & \hbox{if $xy\neq 0$;} \\
   \delta_x (z) , & \hbox{if $y=0$  ;} \\
     \delta_y(z) & \hbox{if $x=0$.}
  \end{array}
\right.$$
Further for all $x,y\in \mathbb{R}$  the integral $$ \int_{-\infty}^{+\infty}|d\gamma_{x,y}^{k,a}(z) |$$
   is finite  and  uniformly bounded.
\end{thm}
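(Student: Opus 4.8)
My plan is to treat the two regimes of the measure $d\gamma^{k,a}_{x,y}$ separately and to reduce the uniform total-variation bound entirely to Lemmas \ref{l1} and \ref{l2}. The identity (\ref{pr}) together with the explicit kernel $\Delta_{k,a}$ has already been assembled term by term from the Hankel-transform formulas (\ref{1}) and (\ref{2}), so for the case $xy\neq 0$ the only things left to establish are (i) that those formal manipulations are legitimate and (ii) that $\int_{-\infty}^{+\infty}|\Delta_{k,a}(x,y,z)|\,|z|^{2k+a-2}\,dz$ is finite and bounded independently of $x,y$. Both (i) and (ii) will follow from the absolute integrability supplied by the two lemmas, once the index conditions $\mu=\frac{2k-1}{a}>-\frac12$ and $\nu=\frac{2k+1}{a}>-\frac12$ are checked under the standing hypotheses $k\ge0$, $a>0$, $2k>a-1$; I would record this verification first, since every subsequent estimate rests on it.

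For the degenerate cases I would simply evaluate $B_{k,a}(\lambda,0)$. Because $\mathcal{J}_\nu(0)=1$ by (\ref{Jj}), formula (\ref{BB}) gives $B_{k,a}(\lambda,0)=1$, whence $B_{k,a}(\lambda,x)B_{k,a}(\lambda,0)=B_{k,a}(\lambda,x)=\int_{-\infty}^{+\infty}B_{k,a}(\lambda,z)\,d\delta_x(z)$, and symmetrically when $x=0$. The total variation of a Dirac mass is $1$, so these cases are automatically consistent with the claimed uniform bound.

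The heart of the argument is the estimate (ii). I would bound $|\Delta_{k,a}(x,y,z)|$ by the sum of its four $R$-terms, use the evenness in $z$ to pass to $(0,+\infty)$, and in each term perform the substitution $w=z^{a/2}$. Two elementary identities make every term collapse onto a lemma integral. First, with $X=|x|^{a/2}$, $Y=|y|^{a/2}$ and $\mu=\frac{2k-1}{a}$ one has $|xy|^{k-\frac12}=(XY)^{\mu}$, so the prefactor $|xy|^{-(k-\frac12)}$ becomes exactly the normalising factor $(XY)^{-\mu}$ appearing in the lemmas; second, the weight transforms as $|z|^{-(k-\frac12)}|z|^{2k+a-2}\,dz=\frac{2}{a}\,w^{\mu+1}\,dw$, reproducing the lemma weight $w^{\mu+1}$. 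In the first two terms the integration variable $|z|^{a/2}$ occupies the third ($J_\mu$) slot of $R_{\mu,\nu}(X,Y,\cdot)$, so these are governed by Lemma \ref{l1}; in the last two terms it occupies a middle ($J_\nu$) slot, namely $R_{\mu,\nu}(X,\cdot,Y)$ and $R_{\mu,\nu}(Y,\cdot,X)$, so these are governed by Lemma \ref{l2}. Each of the four resulting integrals is therefore uniformly bounded, and hence so is their sum.

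Finally I would close the logical loop. The finiteness just obtained justifies the interchange of integration and summation (Fubini) implicit in deriving (\ref{pr}), and since the normalised Bessel functions of order $>-\frac12$ decay at infinity the function $z\mapsto B_{k,a}(\lambda,z)$ is bounded, so $\int_{-\infty}^{+\infty} B_{k,a}(\lambda,z)\,d\gamma^{k,a}_{x,y}(z)$ converges absolutely for every $x,y$. Combining the three cases then yields both the product formula and the uniform total-variation bound. The genuine difficulty is entirely contained in Lemmas \ref{l1} and \ref{l2}; at the level of the theorem the only real hazard is the bookkeeping of the change of variables $w=z^{a/2}$, in particular matching each of the four $R$-terms to the correct lemma and confirming that the prefactor and the weight conspire to produce precisely the normalised lemma integrand.
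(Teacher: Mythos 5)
Your proposal is correct and follows essentially the same route as the paper: the product formula is exactly the identity (\ref{pr}) already assembled from the Hankel-transform formulas (\ref{1}) and (\ref{2}), the degenerate cases reduce to $B_{k,a}(\lambda,0)=1$ via (\ref{BB}), and the uniform total-variation bound is obtained, after the substitution $w=z^{a/2}$, by matching the first two terms of $\Delta_{k,a}$ (integration variable in the $J_\mu$ slot) to Lemma \ref{l1} and the last two (integration variable in a $J_\nu$ slot) to Lemma \ref{l2}, which is precisely how those lemmas are calibrated in the paper. Your bookkeeping identities $|xy|^{k-\frac{1}{2}}=(XY)^{\mu}$ and $|z|^{-(k-\frac{1}{2})}|z|^{2k+a-2}\,dz=\tfrac{2}{a}\,w^{\mu+1}\,dw$ check out, so the reduction is exactly the intended one.
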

  Note here  that the measure $\delta_{x,y}^{k,a}$  has compact  support  if and only if $a=\frac{2}{n}$, $n\in \mathbb{N}$.
 Next we define a similar measure $\sigma_{x,y}$  as 
 $$d \sigma_{x,y}^{k,a}(z)=\left\{
  \begin{array}{ll}
     \Delta_{k,a}(x,z,y) |z|^{ 2k+a-2}dz, & \hbox{if $xy\neq 0$;} \\
   \delta_x (z) , & \hbox{if $y=0$  ;} \\
     \delta_y(z) & \hbox{if $x=0$.}
  \end{array}
\right.$$
Then one can  use Lemmas  \ref{l1} and \ref{l2} to  get that  
$$ \int_{-\infty}^{+\infty}|d\gamma_{x,y}^{k,a}(z) |$$
is finite  and   uniformly bounded. The second main result conserned  with  the generalized translation operator $\tau^{k,a}_y$,  $y\in  \mathbb{R}$ which can be
  defined on $L^2(\mathbb{R},|x|^{2k+a-2})$  using the $(k,a)$-generalized  Fourier  by
$$\mathcal{F}_{k,a}(\tau^{k,a}_y(f))(x)= B_{k,a}(x,y)\mathcal{F}_{k,a} (f)(x),$$
(see \cite{Luc}).
By  Theorem \ref{th1}  we can write for compactly supported function $f$ and   $y\neq 0$,
\begin{eqnarray*}
 \mathcal{F}_{k,a}(\tau_y(f))(x)&=& c_{k,a} \int_{-\infty}^{+\infty}B_{k,a}(x,y)B_{k,a}(x,\xi)f(\xi)|\xi|^{2k+a-2}d\xi
\\&=&c_{k,a} \int_{-\infty}^{+\infty} \int_{-\infty}^{+\infty} B_{k,a}(x,z)\Delta_{k,a}(y,\xi,z)f(\xi)|\xi|^{2k+a-2} |z|^{2k+a-2} dzd\xi.
\\&=&c_{k,a} \int_{-\infty}^{+\infty} B_{k,a}(x,z)\left(\int_{-\infty}^{+\infty}  \Delta_{k,a}(y,\xi,z)f(\xi)|\xi|^{2k+a-2} \; d\xi\right).
\end{eqnarray*}
Then one obtain that 
$$\tau^{k,a}_y(f)(z)=\int_{-\infty}^{+\infty}  \Delta_{k,a}(y,\xi,z)f(\xi)|\xi|^{2k+a-2} \; d\xi
=\int_{-\infty}^{+\infty}  f(\xi)d \sigma_{y,z}^{k,a}(\xi) .$$
From this formula and density we can  state the following 
\begin{thm}
The generalized translation operator $\tau^{k,a}_y$,  $y\in  \mathbb{R}$  can be extended  to  a bounded operator on  $L^p(\mathbb{R}, |x|^{2k+a-2}dx)$ for every  $1\leq p\leq  \infty$ and its $L_{p}$-norm is uniformly bounded ( for the variable $y$).

\end{thm}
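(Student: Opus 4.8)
The plan is to realize $\tau^{k,a}_y$ as an integral operator and then apply the Schur test, using the two uniform total variation bounds already in hand as the row and column estimates. First, when $y=0$ one has $B_{k,a}(x,0)=\mathcal{J}_{\frac{2k-1}{a}}(0)=1$, so that $\mathcal{F}_{k,a}(\tau^{k,a}_0 f)=\mathcal{F}_{k,a}(f)$ and $\tau^{k,a}_0$ is the identity, trivially bounded with norm $1$ on every $L^p$. For $y\neq 0$, I would work with the explicit formula
$$\tau^{k,a}_y(f)(z)=\int_{-\infty}^{+\infty} f(\xi)\,\Delta_{k,a}(y,\xi,z)\,|\xi|^{2k+a-2}\,d\xi,$$
established above for compactly supported $f$, and view it as $\tau^{k,a}_y f(z)=\int K_y(z,\xi)\, f(\xi)\,d\mu(\xi)$ with $d\mu(\xi)=|\xi|^{2k+a-2}\,d\xi$ and kernel $K_y(z,\xi)=\Delta_{k,a}(y,\xi,z)$. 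The goal is then to verify the two Schur conditions for this kernel with constants independent of $y$.

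Next I would match each Schur integral to one of the total variation bounds. For the ``row'' integral, integrating in $\xi$ gives
$$\int_{-\infty}^{+\infty}|K_y(z,\xi)|\,d\mu(\xi)=\int_{-\infty}^{+\infty}|\Delta_{k,a}(y,\xi,z)|\,|\xi|^{2k+a-2}\,d\xi=\int_{-\infty}^{+\infty}|d\sigma^{k,a}_{y,z}(\xi)|,$$
since $\Delta_{k,a}$ appears here with its middle argument as the variable of integration, exactly as in the definition of $\sigma$; this is uniformly bounded in $(y,z)$ by Lemmas \ref{l1} and \ref{l2}. For the ``column'' integral, integrating in $z$ gives
$$\int_{-\infty}^{+\infty}|K_y(z,\xi)|\,|z|^{2k+a-2}\,dz=\int_{-\infty}^{+\infty}|d\gamma^{k,a}_{y,\xi}(z)|,$$
which is uniformly bounded in $(y,\xi)$ by Theorem \ref{th1}. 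Denote the two suprema by $A$ and $B$; both are finite and independent of $y$.

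With these in place the conclusion follows from the generalized Schur test: for $p=\infty$ the row bound gives $\|\tau^{k,a}_y f\|_\infty\le A\,\|f\|_\infty$ directly, while for $p=1$ an application of Fubini together with the column bound gives $\|\tau^{k,a}_y f\|_{L^1(d\mu)}\le B\,\|f\|_{L^1(d\mu)}$, and interpolating between these two endpoints (Riesz--Thorin) yields $\|\tau^{k,a}_y\|_{L^p\to L^p}\le A^{1-1/p}B^{1/p}$ for every $1\le p\le\infty$, with a bound independent of $y$. Finally, since this holds on the dense subspace of compactly supported functions, $\tau^{k,a}_y$ extends to a bounded operator on all of $L^p(\mathbb{R},|x|^{2k+a-2}dx)$ with uniformly bounded norm. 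I expect no deep obstacle here, as the analytic difficulty has been absorbed into Lemmas \ref{l1} and \ref{l2}; the points requiring care are the correct pairing of the row and column integrals with the $\sigma$ and $\gamma$ total variations respectively, and the justification of the Fubini interchange in the $L^1$ estimate, which is guaranteed by the absolute integrability that the same total variation bounds provide.
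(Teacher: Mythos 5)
Your proposal is correct and is essentially the paper's own argument: the paper likewise writes $\tau^{k,a}_y f(z)=\int f(\xi)\,d\sigma_{y,z}^{k,a}(\xi)$, pairs the $\xi$-integral of the kernel with the total variation of $\sigma^{k,a}_{y,z}$ (Lemmas \ref{l1} and \ref{l2}) and the $z$-integral with that of $\gamma^{k,a}_{y,\xi}$ (Theorem \ref{th1}), and concludes by density. Your write-up merely makes explicit the endpoint $L^1$/$L^\infty$ bounds and the Riesz--Thorin interpolation that the paper leaves implicit in the phrase ``from this formula and density.''
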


\end{document}